\author{
Patrick Dondl
\thanks{Abteilung f{\"u}r Angewandte Mathematik, 
Albert-Ludwigs-Universit{\"a}t Freiburg, 
Hermann-Herder-Str.~10, 
79104 Freiburg, Germany; 
{\tt patrick.dondl@mathematik.uni-freiburg.de} }, 
Martin Jesenko
\thanks{Abteilung f{\"u}r Angewandte Mathematik, 
Albert-Ludwigs-Universit{\"a}t Freiburg, 
Hermann-Herder-Str.~10, 
79104 Freiburg, Germany; 
{\tt martin.jesenko@mathematik.uni-freiburg.de} }, 
Michael Scheutzow
\thanks{
Institut f\"{u}r Mathematik, 
MA 7--5, 
Fakult\"{a}t II,
Technische Universit\"{a}t Berlin, 
Stra{\ss}e des 17.~Juni~136,
10623 Berlin, 
Germany;
{\tt ms@math.tu-berlin.de}
}
}
\title{Infinite pinning}
\newtheorem{theo}{Theorem}[section] 
\newtheorem{lemma}[theo]{Lemma}
\newtheorem{prop}[theo]{Proposition}
\theoremstyle{definition}
\theoremstyle{remark}
\newtheorem{remark}[theo]{Remark}
\def\F {{\cal F}}
\def\x {\ dx}
\def\R {\mathbb R}
\def\E {\mathbb E}
\def\P {\mathbb P}
\def\N {\mathbb N}
\def\Z {\mathbb Z}
\def\i {\infty}
\def\* {$*$}
\begin{document}

\maketitle

\begin{abstract}
In this work, we address the occurrence of infinite pinning in a random medium. 
We suppose that an initially flat interface starts to move through the medium due to some constant driving force. The medium is assumed to contain random obstacles. We model their positions by a Poisson point process and their strengths are not bounded.
We determine a necessary condition on its distribution so that regardless of the driving force the interface gets pinned.

\smallskip \noindent
{\bf 2020 Mathematics Subject Classification.} 35R60, 74N20.

\smallskip \noindent
{\bf Key words and phrases.} Quenched random environment, phase boundaries, pinning, viscosity supersolutions

\end{abstract}
\section{Introduction}

In this work we consider two models for an interface propagation that were already presented in \cite{DJS}. There it is shown that pinning occurs even in a quenched heterogeneous medium that has -- when spatially averaged -- no influence on the propagation of an interface. For more information on the setting and the models, including examples from the physics literature, we refer to \cite{DJS}.

In the present work, we consider the question whether \emph{infinite pinning} can occur, i.e., the interface becomes stuck for any arbitrarily large driving force, and for which distributions of obstacles this takes place. In the 1+1-dimensional setting we show that this can happen even if the expectation of the obstacle strength is finite.

Generally, we prove pinning by establishing the existence of a non-negative, stationary supersolution to the evolution equation for the interface. 

A canonical version of our model for the propagation of an interface in a heterogeneous medium is given by a semilinear parabolic equation of the form
\[ \partial_{t} u(t,x) = \Delta u(t,x) - f(x,u(t,x)) + F \]
In our setting of \emph{random} heterogeneous media, $f$ is a given quenched, random field.

As initial condition we consider $ u(0,\cdot) = 0 $. The basic idea is to a.s.~find a (viscosity) supersolution (for the definition and properties see, e.g., \cite{Crandall:1992kn}) to the related stationary problem, i.e., a function $v$ that satisfies
\[ \Delta v(x) - f(x,v(x)) + F \le 0
\quad \mbox{and} \quad
v(x) \ge 0
\quad \mbox{for all } x \in \R.  \] 
By employing an appropriate comparison principle, this immediately implies that the interface a.s. stays below the graph of $v$ for all times since this was the case at $ t = 0 $. Our main goal in this work is thus to show the existence of such a non-negative stationary supersolution in the setting of our two models.
 
The first model we study is a spatially purely discrete variant of the above. We there consider the lattice $\Z^{2}$. Each lattice point acts with a force of random strength chosen by a suitable probability distribution. The notions of the space and time derivative are adapted to the discrete case. This model was studied in, e.g., \cite{Bodineau:2013ur}, where for some very specific distributions of $f$ pinning and depinning results were shown. Here, we focus on results regarding pinning, but consider a large class of distributions (in particular, allowing the aforementioned case of $f$ having zero mean).

\section{Discrete model on $\Z^2$}

In the discrete model on $\Z^2$, the shape of the interface is determined by a function $ \Z \to \Z $. Its propagation is therefore given as a time evolution of (random) functions $u_t: \Z \to \Z$, $t \ge 0$, with the initial condition $ u_0\equiv 0 $\footnote{Note that, as usual in such discrete settings, the subscript does not indicate a partial derivative. Here $u_t$ is the state of the interface at time $t$.}. At any time $t$, the function $u$ may jump from its current value $u_t(i)$ only to $u_t(i) \pm 1$ depending on the current jump rate $ \lambda $. For $\lambda >0$, $u$ can only jump to $u_t(i)+1$ with rate $\lambda$, whereas for $\lambda <0$, $u$ can only jump to $u_t(i)-1$ and does so with rate $-\lambda$. The jump rate depends on the local shape of the interface and the obstacle force at the current position. More precisely,
\[\lambda=\Lambda\big(\Delta_{1} u_t(i)-f(i,u_t(i))\big) \]
where $\Delta_{1} u(i) = u(i+1) + u(i-1) - 2 u(i) $ is the discrete Laplacian, $f(i,j)$ is the obstacle strength at $(i,j) \in \Z^{2}$ and $\Lambda$ is a strictly increasing and bounded function from $\Z$ to $\R$ which satisfies $\Lambda(0)=0$. We suppose $f(i,j)$, $i,j \in \Z$, to be independent and identically distributed $\N_{0}$-valued random variables defined on a probability space $(\Omega,\F,\P)$. The strength 0 simply means the absence of an obstacle. In this article we only consider the case where $f(i,j)\ge 0$ for all $i,j \in \Z$, i.e., the forces are non-negative. For more details and the definition of jump rate consult \cite{DJS}.

We will show the following simple criterion for infinite pinning.

\begin{theo}
\label{theo:discrete-main}
Let for all $ (i,j) \in \Z^{2} $ be $ f(i,j) \sim X $ for some $ \N_{0} $-valued random variable $X$. 
If $X$ has unbounded second moment, i.e., the expectation $ \E( X^{2} ) = \i $, then infinite pinning occurs.
\end{theo}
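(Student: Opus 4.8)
The plan is to construct a non-negative, stationary supersolution $v: \Z \to \Z$ to the discrete stationary problem, i.e., a function satisfying $\Lambda\big(\Delta_1 v(i) - f(i,v(i))\big) \le 0$ for all $i$, which (since $\Lambda$ is strictly increasing with $\Lambda(0)=0$) is equivalent to $\Delta_1 v(i) \le f(i,v(i))$ for all $i \in \Z$. Once such a $v$ exists almost surely, the comparison principle keeps the interface below $v$ for all time regardless of the driving force $F$, which is exactly infinite pinning.

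The key idea for the construction is that $\E(X^2) = \infty$ forces, via a Borel–Cantelli argument, the existence of arbitrarily tall ``towers'' of obstacles that are spaced not too far apart. Concretely, I would look for columns $i$ where $f(i,j)$ is large for a whole vertical run $j = 0, 1, \dots, m$, so that the graph of $v$ can climb steeply there. The heuristic is: to raise $v$ by height $h$ over a horizontal distance $\ell$, we need the discrete Laplacian — which for a piecewise-linear profile is concentrated near the apex — to be absorbed by obstacle strengths of order $h/\ell$ along the ascending slope. Since obstacles in a column of height $\sim h$ occur, by independence, with probability roughly $p_h^h$ where $p_h = \P(X \ge c h/\ell)$, the condition $\E(X^2) = \infty$ (equivalently $\sum_h h\,\P(X \ge h) = \infty$, since $\E(X^2) \asymp \sum_h h\, \P(X\ge h)$) is precisely what makes such configurations appear infinitely often along $\Z$ with the right density. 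I would make $v$ a ``sawtooth'' or ``staircase of triangles'': on a sparse increasing sequence of sites $i_k \to \pm\infty$ place tall triangular bumps of height $h_k$ and base width $\ell_k$, choosing $h_k, \ell_k$ so that (i) $h_k/\ell_k$ grows slowly enough that the required obstacle strength $\sim h_k/\ell_k$ along each slope has probability of occurring that keeps the expected number of suitable locations in $[i_k, i_{k+1}]$ at least one, and (ii) between the bumps $v$ is constant so $\Delta_1 v = 0 \le f$ holds trivially. The non-negativity $v \ge 0$ is automatic if we let the baseline be $0$ and only add non-negative bumps.

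The technical heart is the second Borel–Cantelli / first-moment bookkeeping: one must verify that with the right choice of sequences $(h_k), (\ell_k)$, almost surely every interval of $\Z$ of the prescribed length contains a column with a sufficiently tall run of sufficiently strong obstacles aligned with where the ascending slope of $v$ needs to pass. Because the events ``column $i$ supports a triangle of the right size starting at height $b_k$'' for different $i$ are independent (the $f(i,j)$ are i.i.d.), a second Borel–Cantelli argument applies provided $\sum_i \P(\text{column } i \text{ works}) = \infty$; translating the divergence $\sum_h h\,\P(X \ge h) = \infty$ into this divergence is the main obstacle and requires a careful, possibly non-uniform, choice of the slope $h_k/\ell_k$ (letting it tend to infinity arbitrarily slowly). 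A subtlety is that the apex of a triangular profile has $\Delta_1 v < 0$ there — which is favorable — and the only truly constraining sites are along the two slopes where $\Delta_1 v$ is $0$ except at the ``kinks''; a genuinely piecewise-linear $v$ on $\Z$ has $\Delta_1 v$ supported on finitely many kink sites, so one actually only needs strong obstacles at those finitely many kink points per bump, which dramatically relaxes the requirement — I would exploit this to reduce the needed event to ``a bounded number of large obstacle values in a column,'' making the first-moment estimate cleaner. The remaining routine steps are: verifying the comparison principle applies to this $v$ (it is stated as available), checking measurability/stationarity of the construction, and confirming $\Delta_1 v(i) \le f(i, v(i))$ at the finitely many non-trivial sites of each bump.
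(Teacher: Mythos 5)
Your proposal has a critical gap at the very first step: the discrete supersolution condition must include the driving force $F$. The jump rate for this model is $\lambda = \Lambda\big(\Delta_1 u_t(i) - f(i,u_t(i)) + F\big)$ (the paper's inline formula abbreviates this, but the cited Corollary~2.3 of [DJS] and the continuous analogue $\partial_t u = \Delta u - f + F$ make it clear $F$ is present). Thus a stationary supersolution must satisfy $\Delta_1 v(i) \le f(i,v(i)) - F$, and \emph{infinite} pinning means such a $v$ exists a.s.\ for \emph{every} $F$. Your target inequality $\Delta_1 v(i) \le f(i,v(i))$ is satisfied by $v \equiv 0$ whenever $f \ge 0$, so as stated the theorem would be vacuous. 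This is not a cosmetic slip: it breaks the geometry of your construction. On the flat stretches between your triangular bumps you have $\Delta_1 v = 0$, so the corrected condition requires $f(i,v(i)) \ge F$ there; since $\P(X=0)$ may be positive, a.s.\ infinitely many of those baseline sites fail for any $F > 0$. The observation that "only kink sites are constraining" is exactly what masks this problem — once $F$ is restored, every site is constraining.

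The paper does not rebuild the supersolution from scratch. It invokes Corollary~2.3 of [DJS], which already provides the horizontal gluing and reduces the whole existence question to a one-column probabilistic inequality: a suitable $v$ exists a.s.\ for force $F$ whenever $\E\big(\sup\{-j + X_j : j \in \N_0\}\big) > F$. The quantity $-j + X_j$ encodes the correct budget — climbing $j$ units costs $j$ in Laplacian, but buys an obstacle of strength $X_j$ at that height — and the supremum picks the single best height per column, not a "tower." The new content of the paper is then a self-contained probabilistic lemma showing that this expectation is infinite iff $\E(X^2)=\infty$, proved by direct manipulation of tail sums $\alpha_k = \P(X\ge k)$ (in particular $\P(M\ge n) = 1 - \prod_{k\ge n}(1-\alpha_k) \ge \tfrac12\sum_{k\ge n}\alpha_k$ for large $n$). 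Your heuristic $\E(X^2)\asymp \sum_k k\,\P(X\ge k)$ is pointing at the right sum, but the mechanism is per-column optimization, not Borel--Cantelli over vertical runs; and even setting aside the missing $F$, your proposal delegates the entire horizontal construction to an unwritten "careful bookkeeping," which is precisely what [DJS] supplies and what a self-contained proof would need to redo.
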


In \cite{DJS}, see Collorary 2.3, it was shown that if $ X_{0} \sim X_{1} \sim \ldots \sim f(i,j) $ are independent random variable (possibly even with values in $ \Z $) for which 
\[ \E  \big( \sup\{X_0,-1+X_1, -2+X_2, \dots \} \big)>F \]
for some $F \in \Z$, then almost surely there exists a function  $v :\Z \to \N_0$ such that $\Delta_{1} v(i) \le f(i,v(i)) - F $. As mentioned in the introduction, this stationary supersolution acts as a barrier for the interface since the jump rate is in every point non-positive. Hence, a sufficient condition for infinite pinning reads

$$
\E \big( \sup \{  -j + X_j : j \in \N_{0} \} \big)=\infty.
$$ 
Thus, Theorem~\ref{theo:discrete-main} follows immediately from
\begin{lemma}
Let $X$ be a random variable with values in $ \N_{0} $. Then the following statements are equivalent
\begin{itemize}
\item 
For any independent random variables $ X_{0}, X_{1} , \ldots $ having the same distribution as $X$, we have
$ \E \big( \sup \{  -j + X_j : j \in \N_{0} \}  \big)=\infty $.
\item
$ \E(X^{2} ) = \i. $
\end{itemize}
\end{lemma}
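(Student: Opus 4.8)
The plan is to connect the expected supremum to the tail sum $\sum_{n\ge 0}\P(X>n)$ via a Borel–Cantelli type argument, using the standard fact that $\E(X^2)=\infty$ is equivalent to $\sum_{n\ge 0} n\,\P(X>n)=\infty$ (equivalently $\sum_n \P(X>\sqrt n)=\infty$, or $\sum_n n\,\P(X=n)=\infty$). Write $M:=\sup\{-j+X_j:j\in\N_0\}$. Since $M\ge -j+X_j$ for every fixed $j$ and $\E(X_j)<\infty$ is \emph{not} assumed, note first that $\E(X^2)=\infty$ does not even require $\E X<\infty$; if $\E X=\infty$ then already $\E M\ge \E(X_0)=\infty$ and that direction is trivial. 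So the substance is the case $\E X<\infty$, $\E(X^2)=\infty$.

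For the implication ``$\E(X^2)=\infty\Rightarrow \E M=\infty$'': I would show $\P(M\ge k)$ decays slowly. Fix $k\in\N$. The events $A_j:=\{X_j\ge j+k\}$, $j\in\N_0$, are independent with $\P(A_j)=\P(X\ge j+k)=\P(X>j+k-1)$, and $M\ge k$ as soon as some $A_j$ occurs. Hence
\[
\P(M<k)\le \P\Big(\bigcap_{j\ge 0} A_j^c\Big)=\prod_{j\ge 0}\big(1-\P(X>j+k-1)\big)\le \exp\!\Big(-\sum_{j\ge 0}\P(X>j+k-1)\Big)=\exp\!\Big(-\sum_{m\ge k-1}\P(X>m)\Big).
\]
Writing $T(k):=\sum_{m\ge k-1}\P(X>m)$, we get $\P(M\ge k)\ge 1-e^{-T(k)}$. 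Now $\E M=\sum_{k\ge 1}\P(M\ge k)\ge \sum_{k\ge 1}\big(1-e^{-T(k)}\big)$, and using $1-e^{-t}\ge t/(1+t)\ge \tfrac12\min\{t,1\}$ together with $T(k)\le \E X<\infty$, this sum is comparable to $\sum_{k\ge 1} T(k)$. But $\sum_{k\ge 1}T(k)=\sum_{k\ge1}\sum_{m\ge k-1}\P(X>m)=\sum_{m\ge 0}(m+1)\P(X>m)$, which diverges precisely because $\E(X^2)=\infty$. Hence $\E M=\infty$.

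For the converse ``$\E(X^2)<\infty\Rightarrow \E M<\infty$'': by a union bound, $\P(M\ge k)\le \sum_{j\ge 0}\P(X_j\ge j+k)=\sum_{j\ge 0}\P(X>j+k-1)=T(k)$, so $\E M\le \sum_{k\ge 1}T(k)=\sum_{m\ge 0}(m+1)\P(X>m)<\infty$ when $\E(X^2)<\infty$. (One should also check $M\ge 0$ a.s.\ so that $\E M$ makes sense as a nonnegative quantity — indeed $M\ge -0+X_0\ge 0$.)

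The main obstacle, and the only genuinely delicate point, is the lower bound in the first implication: turning independence of the $A_j$ into the exponential estimate and then verifying that the resulting lower bound $\sum_k T(k)$ is still large enough — i.e., that summing the tail $T(k)$ over $k$ recovers exactly the second-moment criterion via the double-sum interchange $\sum_{k}\sum_{m\ge k-1}=\sum_m(m+1)$. Everything else is a routine union bound and the classical moment–tail identities $\E X=\sum_{m\ge0}\P(X>m)$ and $\E(X^2)\asymp\sum_{m\ge0}(m+1)\P(X>m)$; I would state these without proof or with a one-line justification.
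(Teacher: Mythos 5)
Your proposal is correct and follows essentially the same route as the paper: split off the trivial case $\E X=\infty$, bound $\P(M\ge k)$ from below by writing $1-\P(M<k)$ as $1-\prod_j(1-\P(X\ge j+k))$, estimate this by elementary inequalities against the tail sum $T(k)=\sum_{m\ge k-1}\P(X>m)$, and then swap the double sum to recover the second-moment criterion; the converse is the same union bound plus double-sum swap used in the paper. The only cosmetic difference is that you use $1-e^{-t}\ge t/(1+t)$ (with $T(k)\le\E X$) where the paper uses the pair $1-u\ge\tfrac12(1-u^2)$ and $-2y\le\log(1-y)\le -y$ after choosing $k_0$ with $\sum_{k\ge k_0}\alpha_k\le\tfrac12$ — both are routine ways to say $1-\prod(1-\alpha_k)\gtrsim\sum\alpha_k$ when $\sum\alpha_k$ is bounded.
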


\begin{proof}
Define
$$
M:= \sup \{  -j + X_j : j \in \N_{0} \}.
$$
Then, for $n \in \N$,
$$
\P\big(M \ge n)\le \P(X_0 \ge n)+\P(X_1 \ge n+1)+\P(X_2 \ge n+2)+...=\sum_{l=0}^\infty (l+1)\P(X=n+l)
$$
and therefore
$$
\E(M)
= \sum_{n=1}^\infty \P\big(M \ge n)
\le \sum_{l=0}^\infty (l+1) \sum_{n=1}^\infty  \P(X=n+l)
= \sum_{k=0}^\infty \Big(\P(X=k) \sum_{l=0}^k l\Big)
< \infty  
$$
in case $\E( X^2 ) <\infty$. 

On the other hand, assume $\E (X^2)=\infty$. If $\E X=\infty$, then $\E M \ge \E X_0=\infty$. Therefore, let us explore the case $\E X<\infty$.

Let $\alpha_k:=\P(X \ge k)$, $k \in \N$. Then for each $ n \in \N $
\[ \P(M \ge n) = 1 - \P(M < n) = 1 - \prod_{k=0}^{\infty} \P( X_{k} - k < n ) = 1 - \prod_{k=0}^{\infty} ( 1 - \alpha_{n+k} ) = 1 - \prod_{k=n}^{\infty} ( 1 - \alpha_{k} ). \]
Since $\E X<\infty$, there exists some $k_0 \in \N$ for which $\sum_{k=k_0}^{\infty} \alpha_k   \le \frac{1}{2} $. 
For every $y \in [0,\frac{1}{2}] $ it holds $ - y \ge \log (1-y) \ge -2 y$. Therefore, for $n \ge k_0$,
\begin{align*}
\P(M \ge n)
& =1-\prod_{k=n}^\infty(1-\alpha_k) \\
& \ge \frac 1{2}\Big(1-\prod_{k=n}^\infty(1-\alpha_k)^2 \Big)\\
& = \frac 1{2}\Big(1-\exp\big\{2 \sum_{k=n}^\infty \log \big(1-\alpha_k\big)   \big\}\Big)\\
& \ge \frac 1{2}\Big( 1-\exp\big\{-2 \sum_{k=n}^\infty \alpha_k   \big\}\Big)\\
& \ge \frac 1{2}\Big(   1-\exp\big\{\log \big(1-\sum_{k=n}^\infty \alpha_k   \big)               \big\}  \Big)\\
& = \frac 1{2}\sum_{k=n}^\infty \alpha_k , 
\end{align*}
where the first inequality follows since $1-u\ge \frac{1}{2}(1-u^2)$ for $u \in [0,1]$.
Therefore,
$$
\E M
= \sum_{n = 1}^{\infty} \P( M \ge n ) 
\ge \sum_{ n = k_0 }^{\infty} \P( M \ge n )
\ge \frac{1}{2} \sum_{ n = k_0 }^{\infty} \sum_{k=n}^{\infty} \alpha_k      
= \frac{1}{2} \sum_{k=k_0}^\infty(k-k_0+1) \alpha_k 
= \infty
$$
since
\[ \i = \E( X^{2} ) = \sum_{k=1}^{\i} (2k-1) \P(X \ge k) = 2 \sum_{k=1}^{\i} k \alpha_{k} - \E X. \]

\end{proof}

\section{Continuous model}
For the continuous model, we take the setting described in \cite{DDS} that we now shortly present.
We are investigating the behavior of solutions $ u \colon \R^n\times [0,\infty) \times \Omega \to \R $ of the semilinear parabolic partial differential equation
\begin{eqnarray} 
\Delta u( x , t , \omega ) - f( x , u( x , t , \omega ) , \omega ) + F & = & \partial_{t} u( x , t , \omega ) , \label{eq:model} \\
u(x,0,\omega) & = & 0. \nonumber
\end{eqnarray}
%

%
For the sake of simplicity, we suppose that all the obstacles are of the same shape and have the following properties:
\begin{itemize}
\item
Shape of obstacles is given by the function $ \varphi \in C_{c}^{\i}( \R^{n} \times \R ) $ that satisfies
\[ \varphi(x,y) \ge 1 \mbox{ for } \max \{ |x|,|y| \} \le r_{0}
\quad \mbox{and} \quad
\varphi(x,y) = 0 \mbox{ for } | (x, y) | \ge r_{1} \]
for some $ r_{0}, r_{1} > 0 $ with $ r_{1} > \sqrt{n} r_{0} $.
\item 
Obstacle positions $ \{ ( x_{i} , y_{i} ) \}_{ i \in \N } $ are distributed
according to an $ (n + 1) $-dimen\-sional Poisson point process on $ \R^{n} \times [ r_{1} , \i ) $ 
with intensity $ \lambda > 0 $.
\item 
Obstacle strengths $ \{ f_{i} \}_{ i \in \N } $ are independent and identically distributed strictly positive random variables
($ f_{i} \sim f_{1} $ for all $ i \in \N $) that are independent of $ \{ ( x_{i} , y_{i} ) \}_{ i \in \N } $.
\end{itemize}

Thus, the force of the obstacle field is the random function
\[ f( x , y, \omega ) = \sum_{i} f_{i}( \omega ) \varphi( x - x_{i}( \omega ), y - y_{i}( \omega ) ). \]

\begin{remark}
We note that this specific form is only an example which we focus on for concreteness' sake and to simplify the exposition. Variants of these obstacle distributions, e.g., obstacles centered on lattice sites with random strength, lead to the same results.
\end{remark}

%

Again, our goal is to construct a non-negative, stationary supersolution for all $F>0$.
The construction heavily relies on the ideas in \cite{DDS}.
%
%


The first step of the construction is to find a stationary supersolution for a related Neumann problem on a ball with an obstacle in its center.

From the requirement of being a supersolution, it is clear that the Laplacian of our constructed function may be positive (if $f$ is sufficiently large there)  inside an obstacle and it must be negative (below $-F$) outside, in order to compensate for the driving force $F$.

We thus explicitly construct a function that is radially increasing, has an appropriate Laplacian and becomes flat at the boundary of the ball, see Figure~\ref{figure:ena-ovira}. 

\begin{figure}[ht]
\begin{center}
\includegraphics[width=.9\textwidth]{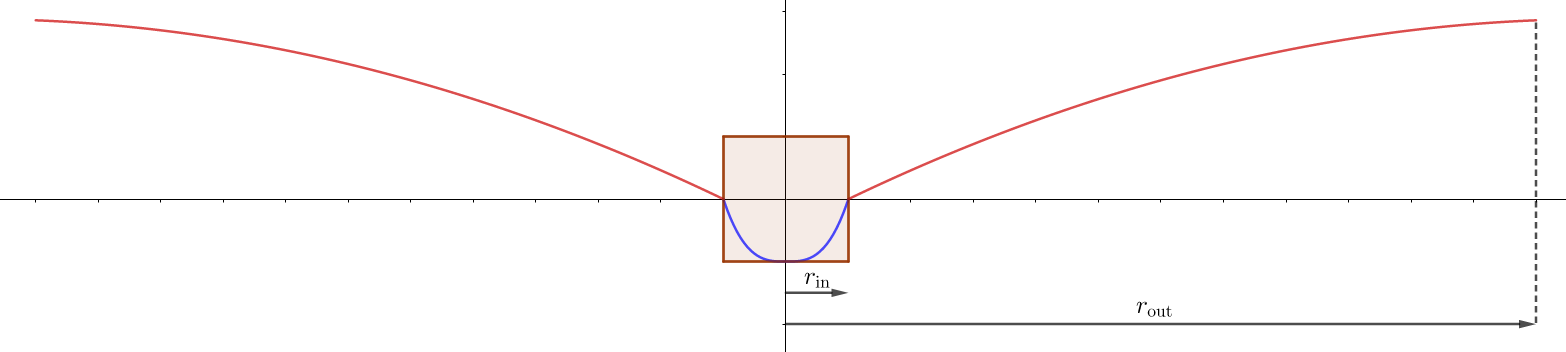}
\end{center}
\vspace{-5mm}
\caption{Construction of the local supersolution. 
We construct a radial function with a given positive Laplacian in the inner ball and a negative constant Laplacian on the outer ring.
It should get flat a the boundary and may be non-differentiable as long as the derivative jumps downwards.}
\label{figure:ena-ovira}
\end{figure}
More precisely, we choose some $ 0 < r_{\rm in} < r_{\rm out} $ and $ F_{\rm in}, F_{\rm out} > 0 $.
The parameter $ r_{\rm in} $ determines the cylinder 
$ \{ (x,y) \in \R^{n} \times \R: |x| \le r_{\rm in}, |y| \le r_{\rm in} \} $
where we suppose the obstacle to have the full strength.
We are looking for a radially-symmetric function $ v_{\rm local} $ that satisfies 

\begin{equation} 
\label{eq:local-sol}
\Delta v_{\rm local}(x) \le 
\begin{cases} 
F_{\rm in}, & |x| < r_{\rm in}, \\ 
- F_{\rm out}, & r_{\rm in} < |x| < r_{\rm out}.
\end{cases}
\end{equation}
It may be non-differentiable on $ \partial B_{\rm in} $, however, to be a viscosity supersolution, it must fulfil
\[ \lim_{ \genfrac{}{}{0pt}{}{ x \in B_{\rm in} }{ x \to x_{0} } } \partial_{r} v_{\rm local}(x) 
\ge \lim_{ \genfrac{}{}{0pt}{}{ x \not\in \overline{ B_{\rm in} } }{ x \to x_{0} } } \partial_{r} v_{\rm local}(x)
\quad \mbox{for every } x_{0} \in \partial B_{\rm in}. \]
We impose 
\[ | v_{\rm local}(x) | \le r_{\rm in} \quad \mbox{for } |x| < r_{\rm in} \]
since the solution must lie within the cylinder modelling an obstacle.
Moreover, let
\[ \partial_{r} v_{\rm local}(x) = 0 \quad \mbox{for } |x| = r_{\rm out} \] 
and $ v_{\rm local}(x) = \i $ if $ |x| > r_{\rm out} $.  
Denote the (blue) part in the inner ball by $ v_{ \rm in } $.
Let $ m \in \N $ be arbitrary. 
(It will serve to ensure that the supersolution stays in the obstacle.)
In contrast to \cite{DDS}, we do not (necessarily) take constant Laplacian but allow for a specific function instead. We choose a radially-symmetric function $v_{ \rm in }(x)=\phi(|x|)$ with Laplacian $\Delta v_{ \rm in }(x) = F_{\rm in}\cdot( \frac{ |x| }{ r_{\rm in} } )^{m} $, i.e., 
\[ \phi''(r) + \frac{n-1}{r} \phi'(r) = F_{\rm in}\cdot \left( \frac{ r }{ r_{\rm in} } \right)^{m} 
\quad \mbox{with} \quad 
\phi( r_{\rm in} ) = 0. \]
Then
%
%
%
%
\[ \phi(r) = \frac{ F_{\rm in} }{ (m+n)(m+2) r_{\rm in}^{m} } 
( r^{m+2}- r_{\rm in}^{m+2} ). \]
To stay in the cylinder, it must hold that
\[ \phi(0) = \frac{ - F_{\rm in} r_{\rm in}^{2} }{ (m+n)(m+2) } \ge -r_{ \rm in }. \]
The derivative at the boundary is
\[ \phi'( r_{ \rm in } ) = \frac{ F_{\rm in} r_{\rm in} }{ m+n }. \]
In the remaining part of the ball, we take the same function $ v_{\rm out} $ as in \cite{DDS}. 
Again it is radially symmetric, and therefore we write
$ v_{\rm out}(x) = \psi(|x|) $.
It should meet $ v_{\rm in} $ at $ r_{\rm in} $, 
i.e.~$ \psi( r_{\rm in} ) = 0 $, and has a zero normal derivative on $ \partial B_{ r_{\rm out } } $, therefore, 
$ \psi'( r_{\rm out} ) = 0 $. 
Its Laplacian is simply $ - F_{ \rm out } $. Hence,
\[ \psi''(r) + \frac{n-1}{r} \psi'(r) = - F_{\rm out}. \]
For our construction, the function values of $\psi$ are irrelevant -- it is enough to consider its derivative. 
%
%
%
We obtain
\[ \psi'(r)= \frac{ F_{ \rm out } }{ n } \frac{ ( r_{ \rm out }^{n} - r^{n} ) }{ r^{n-1} }. \]
Hence,
\[ \psi'( r_{ \rm in } ) 
= \frac{ F_{ \rm out } }{ n r_{ \rm in }^{n-1} } ( r_{ \rm out }^{n} - r_{ \rm in }^{n} ) 
= \frac{ F_{ \rm out } r_{ \rm in } }{ n } \left( \frac{ r_{ \rm out }^{n} }{ r_{ \rm in }^{n} } - 1 \right). \]
If we define $ v_{ \rm local } $ in such a way, it will be a viscosity supersolution if 
\begin{equation}
\label{eq:cond-loc-subsol}
\phi'( r_{ \rm in } ) \ge \psi'( r_{ \rm in } )
\quad \mbox{or} \quad
\frac{ F_{\rm in} }{ m+n } \ge \frac{ F_{ \rm out } }{ n }  \left( \frac{ r_{ \rm out }^{n} }{ r_{ \rm in }^{n} } - 1 \right). 
\end{equation}

If we take the minimum of appropriately translated local supersolutions, as depicted in Figure~\ref{figure:vec-ovir}, 
we obtain a supersolution for a problem with obstacles all having the height-coordinate $ y $ equal 0.
We call this function the \emph{flat supersolution}.

\begin{figure}[ht]
\begin{center}
\includegraphics[width=\textwidth]{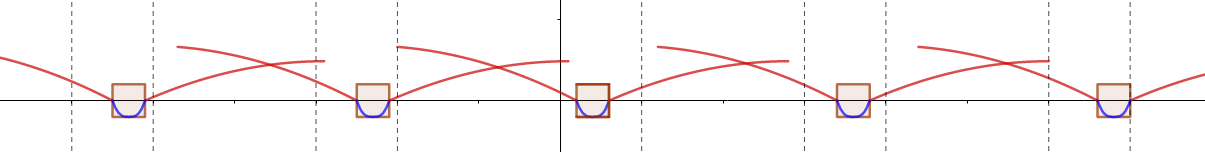}
\end{center}
\vspace{-5mm}
\caption{Flat supersolution. 
If we have several translated local supersolutions such that their domains cover $ \R^{n} $, their minimum is a viscosity supersolution of the corresponding equation.}
\label{figure:vec-ovir}
\end{figure}

Since we are in the random case, we must first localize sufficiently many obstacles, i.e., find a.s.~an array of them.
Its existence will follow from the result from the percolation theory below.

\begin{theo}[{\cite[Theorem 1]{DDGHS}}] 
\label{theo:percolation}
Suppose that to each  $ z \in \Z^{n+1} $ a state is assigned that can be open or closed.
For every point the probability that it is open is $ p \in (0,1) $ with different points receiving independent states. 
If $ p > 1 - \frac{1}{ (2n + 2)^{2} } $, 
then there exists a.s. a (random) function $ L : \Z^{n} \to \N $ with the following properties:
\begin{itemize}
\item
For each $ a \in \Z^{n} $, the site $ (a, L(a)) \in \Z^{n+1} $ is open.
\item
For any $ a, b \in \Z^{n} $ with $ \| a - b \|_{1} = 1 $, we have $ | L(a) - L(b) | \le 1 $.
\end{itemize}
\end{theo}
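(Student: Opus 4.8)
Call a function $L:\Z^{n}\to\N$ \emph{admissible} if $(a,L(a))$ is open for every $a\in\Z^{n}$ and $|L(a)-L(b)|\le 1$ whenever $\|a-b\|_{1}=1$; the theorem asks for the a.s.\ existence of an admissible $L$. The first remark is that the class of admissible functions is stable under pointwise minima: the minimum of two $1$-Lipschitz functions is $1$-Lipschitz, and at each site its value coincides with one of the two given (hence open) heights. Consequently, as soon as this class is non-empty it possesses a pointwise smallest element, which furnishes a canonical, measurable choice of $L$. It therefore suffices to show that a.s.\ at least one admissible function exists.

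I would establish this by a Peierls-type (contour) argument, building $L$ from the bottom. For each column $a\in\Z^{n}$ set $g(a):=\min\{h\ge 1:(a,h)\text{ is open}\}$; since within a fixed column the states are i.i.d.\ and open with probability $p>0$, a.s.\ every column contains an open site, so $g$ is a.s.\ finite everywhere. The function $g$ is the natural candidate for $L$ but may violate the Lipschitz condition where it jumps by $2$ or more between neighbouring columns. One then \emph{repairs} $g$: in every offending column, raise the chosen height to the next open site above, and iterate. Because this procedure only ever raises heights, any limiting function it produces still satisfies $L\ge g\ge 1$, so it is automatically $\N$-valued, and a short argument shows the limit is admissible whenever every column stabilises; the only obstruction is that the repair may trigger, in some column, an unbounded cascade of forced liftings.

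The geometric core is that a cascade lifting a column to height $k$ can occur only in the presence of a \emph{closed barrier}: a set $S$ of closed sites of $\Z^{n+1}$, connected in the appropriate ($\ast$-type) adjacency, extending from height of order $k$ down towards the base over a connected block of columns and meeting the column in question, with $|S|$ at least of order $k$; indeed a $1$-Lipschitz open surface can be kept from descending only by such a connected ``wall''. It then remains to bound the probability of a large closed barrier. A barrier of size $j$ meeting a prescribed column can be enumerated by a sequential exploration in which each step offers a bounded number of choices, and the bookkeeping can be organised so that this number equals $(2n+2)^{2}$, the square of the coordination number of $\Z^{n+1}$; hence there are at most $\big((2n+2)^{2}\big)^{j}$ such barriers, each entirely closed with probability $(1-p)^{j}$. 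Since a barrier forcing a fixed column $a$ above level $k$ has size at least $ck$ for some $c=c(n)>0$, the probability that the repair at $a$ reaches level $k$ is at most $\sum_{j\ge ck}\big((2n+2)^{2}(1-p)\big)^{j}$, which is finite and tends to $0$ as $k\to\i$ precisely when $p>1-\frac{1}{(2n+2)^{2}}$. Summing over the countably many columns, a.s.\ no column is subject to an infinite cascade, the repair produces an everywhere-finite admissible function, and the two asserted properties hold by construction.

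The main obstacle is the combinatorics behind the last two paragraphs: one must make precise the notion of closed barrier so that (i) \emph{every} failure of the repair provably yields a connected closed set whose size is commensurate with the height to which it obstructs a column -- this is exactly where the $1$-Lipschitz geometry of interfaces in $\Z^{n+1}$ enters, and where one must check that the barrier is genuinely anchored (it meets the obstructed column and descends to a bounded reference level) so that its enumeration is a legitimate finite count -- and (ii) the enumeration is sharp enough to yield the constant $(2n+2)^{2}$, and hence the stated threshold, rather than a cruder lattice-animal bound. Once those two ingredients are in place the geometric-series estimate is routine; the full argument is carried out in \cite{DDGHS}.
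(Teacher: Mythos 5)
First, a point of reference: the paper does not prove Theorem~\ref{theo:percolation} at all --- it is imported verbatim from \cite{DDGHS} --- so there is no internal proof to compare against; your plan has to be measured against the argument in that source.

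Your outline captures the right mechanism and, in particular, correctly identifies where the threshold $1-\frac{1}{(2n+2)^{2}}$ must come from, but the decisive step is asserted rather than proved, and it is precisely the content of the theorem. Concretely: (i) the claim that every unbounded repair cascade at a column $a$ produces a $\ast$-connected set of \emph{closed} sites, anchored at $a$ and at the base level, of cardinality at least $ck$ when the cascade reaches height $k$, and (ii) the claim that such anchored barriers of size $j$ can be enumerated with exactly $(2n+2)^{2}$ choices per site, are both nontrivial, and (ii) as stated is not how the sharp constant actually arises. In \cite{DDGHS} the construction is direct rather than by repair-and-contour: one defines admissible paths in $\Z^{n+1}$ whose steps are either a unit step up within a column, required to land on a \emph{closed} site, or a diagonal step to a neighbouring column and down by one, with no condition; $L(x)-1$ is the greatest height reachable at $x$ from the base by such a path. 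The Lipschitz property and the openness of $(x,L(x))$ are then immediate from the two step types, and the probability estimate counts at most $(2n+2)^{k}$ paths of length $k$, each containing at least $k/2$ distinct closed sites (since at least half the steps of a path returning to a bounded height band are upward). This yields a geometric series with ratio $(2n+2)\sqrt{1-p}$, i.e.\ convergence exactly when $(2n+2)^{2}(1-p)<1$; the square enters through the ``half the sites are closed'' count, not through a coordination number squared in the enumeration. Your preliminary observations (closure of the admissible class under pointwise minima, hence existence of a lowest Lipschitz surface; a.s.\ finiteness of the column-minima $g$) are correct and also appear in \cite{DDGHS}, but as it stands the proposal is a plan whose core lemma is deferred to the very reference it is meant to reprove.
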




\begin{figure}[ht]
\begin{center}
\includegraphics[width=.5\textwidth]{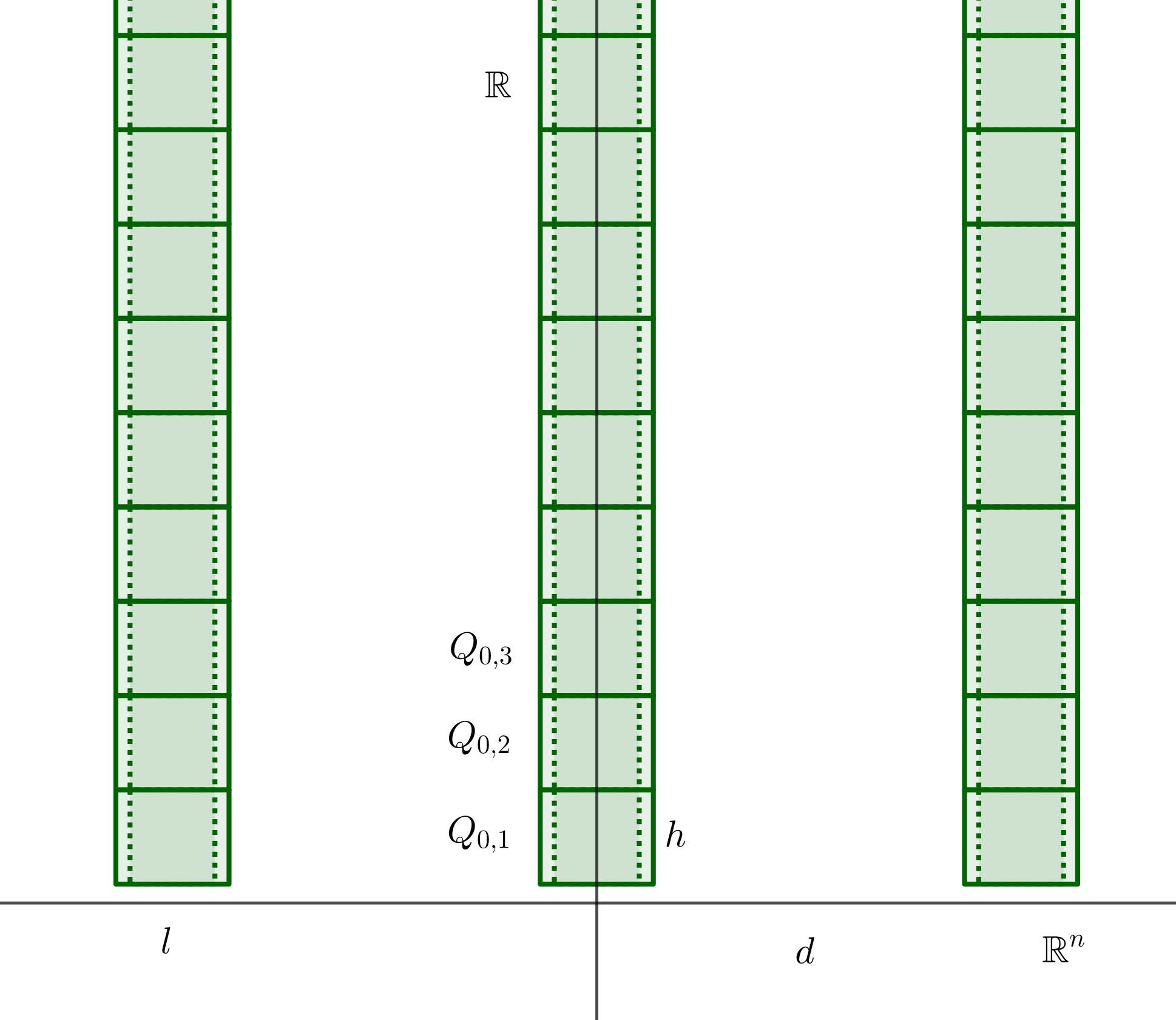}
\end{center}
\vspace{-5mm}
\caption{Decomposition. We decompose the upper half-space into cuboids $ Q_{a,j} $ of volume $ l^{n} h $ with stripes of the width $d$ between them. The centers of obstacles should lie in the smaller cuboids of volume $ (l-2 r_{1})^{n} h $, denoted by the dotted lines.}
\label{fig:decomposition}
\end{figure}

Now, for each $ a = ( a_{1} , \ldots , a_{n} ) \in \Z^{n} $ and $ j \in \N $, define 
\[ Q_{a} := \prod_{ i = 1 }^{n} \big[ a_{i} ( l + d ) - \tfrac{l}{2} , a_{i} ( l + d ) + \tfrac{l}{2} \big], \quad
Q_{a,j} := Q_{a} \times [ ( j - 1 ) h + r_{1} , j h + r_{1} ] \]
for some (still arbitrary) $ l > 2 r_{1} $ and $ h , d > 0 $.
We say that a point $ (a,j) $ is open if there exists a sufficiently strong obstacle 
such that its center $ (x,y) \in \R^{n} \times [ r_{1} , \i ) $ lies in $ Q_{a,j} $ 
and fulfils $ | x_{i} - a_{i} ( l + d ) | \le \frac{l}{2} - r_{1} $ for every $ i \in \{ 1 , \ldots , n \} $.
(It lies within the part of $ Q_{a,j} $ bounded by the dotted line in Figure~\ref{fig:decomposition}.)

The probability of the event that the center of an obstacle with strength at least $M$ lies in an cuboid of volume $ (l-2 r_{1})^{n} h $ is $ 1 - \exp [ - \lambda ( l - 2 r_{1} )^{n} h {\P}( f_{1} \ge M ) ] $. According to Theorem~\ref{theo:percolation} a box is thus open if 
\[ 1 - \exp [ - \lambda ( l - 2 r_{1} )^{n} h {\P}( f_{1} \ge M ) ] > 1 - \tfrac{1}{ (2n + 2)^{2} }, \]
%
%
%
%
%
%
%
%
%
or equivalently if
\begin{equation}
\label{eq:cond-lMh}
l > 2 r_{1} + \left( \frac{ 2 \log( 2 n + 2 ) }{ \lambda h {\P}( f_{1} \ge M ) } \right)^{1/n}. 
\end{equation}

The plan to construct a supersolution is to locate sufficiently many obstacles and use the flat supersolution adapted to the height of each such obstacle. This is done by adding a \emph{lifting function}, see also Figure~\ref{figure:lepilna-funkcija}.
\begin{prop}[{\cite[Proposition 2.13]{DDS}}]
\label{prop:lifting-function}
Let $ h, l, d > 0 $. Suppose $ y: \Z^{n} \to \R $ has the following property:
\[ \forall a,b \in \Z^{n}: \| a - b \|_{1} = 1 \ \Rightarrow \ | y(a) - y(b) | < 2h. \] 
Then there exists $ C_{1} = C_{1}(n) > 0 $ and a smooth function $ v_{\rm lift} : \R^{n} \to \R $ such that
\begin{itemize}
\item 
$ v_{\rm lift}|_{ Q_{a} } = y(a) $ for every $ a \in \Z^{n} $,
\item
$ \| D^{2} v_{\rm lift}  \|_{\i} \le C_{1} \frac{h}{ d^{2} } $,
\item
$ \| \nabla v_{\rm lift}  \|_{\i} \le C_{1} \frac{h}{d} $.
\end{itemize}
\end{prop}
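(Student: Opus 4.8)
The plan is to obtain $v_{\rm lift}$ by extending $y$ to a piecewise constant function on all of $\R^n$ and then mollifying at a length scale proportional to the gap width $d$. Concretely, for $a \in \Z^n$ let $\hat Q_a := \prod_{i=1}^{n} [\,a_i(l+d) - \tfrac{l+d}{2},\, a_i(l+d) + \tfrac{l+d}{2}\,]$ be the cube of side $l+d$ concentric with $Q_a$; the $\hat Q_a$ tile $\R^n$ up to a Lebesgue-null set, so $\tilde y(x) := y(a)$ for $x$ in the interior of $\hat Q_a$ defines a locally bounded, measurable function (its values on the grid of boundary hyperplanes play no role). Fix once and for all a standard mollifier $\eta \in C_c^\infty(\R^n)$ with $\supp \eta \subseteq B_1$, $\eta \ge 0$, $\int \eta = 1$; put $\rho := d/2$, $\eta_\rho(z) := \rho^{-n}\eta(z/\rho)$, and set $v_{\rm lift} := \tilde y * \eta_\rho$. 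This is smooth, and it is the candidate.

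Next I would check the first property. For $x \in Q_a$ one has $\dist_{\ell^\infty}(x, \R^n\setminus\hat Q_a) \ge \tfrac{d}{2} = \rho$, hence the Euclidean ball $B_\rho(x)$ lies in $\overline{\hat Q_a}$, on which $\tilde y \equiv y(a)$ a.e.; therefore $v_{\rm lift}(x) = y(a)\int \eta_\rho = y(a)$.

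For the derivative bounds, use that $\int \nabla \eta_\rho = 0$ and $\int D^2\eta_\rho = 0$ (compact support), so
$$\nabla v_{\rm lift}(x) = \int \big(\tilde y(x-z) - \tilde y(x)\big)\,\nabla \eta_\rho(z)\,dz, \qquad D^2 v_{\rm lift}(x) = \int \big(\tilde y(x-z) - \tilde y(x)\big)\,D^2\eta_\rho(z)\,dz.$$
The key point is to control the oscillation of $\tilde y$ over one mollification window: if $|z| \le \rho = \tfrac{d}{2} < \tfrac{l+d}{2}$, then $x$ and $x-z$ lie in cubes $\hat Q_a, \hat Q_b$ with $\|a-b\|_\infty \le 1$ (a displacement of less than half a cube side crosses at most one grid hyperplane per coordinate), hence $\|a-b\|_1 \le n$; chaining the hypothesis along a lattice path of length $\le n$ from $a$ to $b$ gives $|\tilde y(x-z) - \tilde y(x)| < 2nh$. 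Since $\int|\nabla\eta_\rho| = \rho^{-1}\|\nabla\eta\|_{L^1}$ and $\int|D^2\eta_\rho| = \rho^{-2}\|D^2\eta\|_{L^1}$, this yields $\|\nabla v_{\rm lift}\|_\infty \le 4n\|\nabla\eta\|_{L^1}\,h/d$ and $\|D^2 v_{\rm lift}\|_\infty \le 8n\|D^2\eta\|_{L^1}\,h/d^2$, so $C_1 := 8n\big(\|\nabla\eta\|_{L^1} + \|D^2\eta\|_{L^1}\big)$, a quantity depending only on $n$, works.

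I do not expect a genuine obstacle here; the construction is routine. The two points that need care are (i) synchronizing the mollification radius with half the gap width so that $v_{\rm lift}$ is \emph{exactly} constant on each $Q_a$, and (ii) bounding the oscillation of $\tilde y$ across one window by a fixed multiple of $h$ — this is precisely where the discrete Lipschitz hypothesis $\|a-b\|_1 = 1 \Rightarrow |y(a)-y(b)| < 2h$ enters, via a telescoping estimate along a lattice path. It is also worth verifying that $\|\nabla\eta\|_{L^1}$ and $\|D^2\eta\|_{L^1}$ of the fixed reference mollifier depend only on $n$, so that the final constant is honestly $C_1(n)$. An essentially equivalent route is to take a smooth partition of unity $\{\chi_a\}_{a\in\Z^n}$ with $\chi_a \equiv 1$ on $Q_a$, $\supp\chi_a$ disjoint from $Q_b$ for $b \ne a$, and $\|D^k\chi_a\|_\infty \le C(n)d^{-k}$, and set $v_{\rm lift} := \sum_a y(a)\chi_a$; the estimates are identical after writing $\nabla v_{\rm lift} = \sum_a (y(a)-y(a_0))\nabla\chi_a$ with $a_0$ a reference index near the evaluation point.
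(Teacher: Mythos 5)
Your construction is correct. The paper itself cites this proposition from \cite{DDS} without reproducing a proof, so there is no internal argument to compare against; your mollification approach is the standard one and almost certainly matches the source in spirit. The three points that needed care are handled properly: (i) the mollification radius $\rho = d/2$ is calibrated so that for $x \in Q_a$ the Euclidean ball $B_\rho(x)$ (contained in the $\ell^\infty$-ball of radius $\rho$) stays inside $\hat Q_a$, giving exact constancy $v_{\rm lift}|_{Q_a} = y(a)$; (ii) the oscillation of the piecewise constant extension over a window of $\ell^\infty$-width $d/2 < (l+d)/2$ is bounded by $2nh$ via a coordinate-by-coordinate chaining, which is precisely where the $\|a-b\|_1 = 1 \Rightarrow |y(a)-y(b)|<2h$ hypothesis enters; and (iii) the scaling $\int |\nabla \eta_\rho| = \rho^{-1}\|\nabla\eta\|_{L^1}$, $\int |D^2\eta_\rho| = \rho^{-2}\|D^2\eta\|_{L^1}$ together with $\int \nabla \eta_\rho = 0$, $\int D^2\eta_\rho = 0$ gives the bounds with $C_1$ depending only on $n$ through a fixed reference mollifier. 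One tiny remark: you implicitly use that the derivative estimate is an essential supremum argument on a continuous function, so measure-zero ambiguities of $\tilde y$ on the grid hyperplanes are harmless; this is fine but worth a sentence in a final write-up. The alternative partition-of-unity route you sketch at the end is indeed equivalent and marginally more explicit about where the cutoffs live, but buys nothing extra.
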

%
\begin{figure}[ht]
\begin{center}
\includegraphics[width=\textwidth]{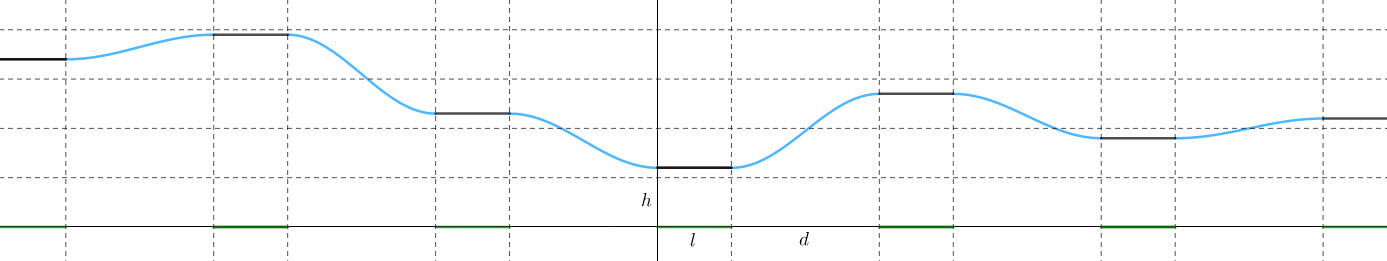}
\end{center}
\vspace{-5mm}
\caption{Lifting function. This is a smooth function that has the prescribed value in the boxes and for which we have bounds for the first and second derivative outside the boxes.}
\label{figure:lepilna-funkcija}
\end{figure}
%
%

Now we are in the position to state and prove the main result for the continuous case.
\begin{theo}
\label{theo:main-cont}
If
\[ \limsup_{ x \to \i } x^{ \frac{1}{2} + \frac{1}{n} }  {\P}( f_{1} \ge x )  = \i, \]
then infinite pinning occurs.
\end{theo}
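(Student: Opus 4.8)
For a fixed $F>0$ the goal is to produce, almost surely, a non-negative stationary viscosity supersolution $v$ of $\Delta v-f+F\le 0$ on $\R^n$; by the comparison principle this pins the interface, and, $F$ being arbitrary, gives infinite pinning (one may even obtain a single full-probability event, since a supersolution for $F$ is one for every smaller value). Following the scheme laid out above, I would take $v=v_{\rm flat}+v_{\rm lift}$: Theorem~\ref{theo:percolation} locates an array of obstacles, one per column $a\in\Z^n$, with strengths $\ge M$ and heights $y(a)$ satisfying $|y(a)-y(b)|<2h$ for neighbours; $v_{\rm flat}$ is the pointwise minimum of the local supersolution $v_{\rm local}$ of \eqref{eq:local-sol} translated to the horizontal positions of these obstacles; and $v_{\rm lift}$ from Proposition~\ref{prop:lifting-function} raises each piece to height $y(a)$. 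Two standard viscosity facts make $v$ a supersolution: the minimum of supersolutions is a supersolution, and a downward kink (present on each $\partial B_{\rm in}$ thanks to \eqref{eq:cond-loc-subsol}, and along the seams of the minimum) admits no $C^2$ test function from below, hence imposes nothing. Concretely: on the support of an obstacle $v_{\rm lift}$ is locally constant, so there $\Delta v=\Delta v_{\rm in}\le F_{\rm in}$ while $f\ge f_i\ge M$; elsewhere $\Delta v\le -F_{\rm out}+n\|D^2 v_{\rm lift}\|_\infty\le -F_{\rm out}+nC_1 h/d^2$ while $f\ge 0$. Non-negativity is immediate: $v_{\rm flat}\ge\phi(0)\ge -r_{\rm in}\ge -r_0$, while $v_{\rm lift}\ge\inf_a y(a)\ge r_1>\sqrt n\,r_0$, the latter because the lifting function stays within the range of its prescribed values (clear from its construction).

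Thus $v$ works as soon as one can choose $0<r_{\rm in}<r_{\rm out}$, $F_{\rm in},F_{\rm out}>0$, $m\in\N$, $l>2r_1$, $h,d>0$ and the strength threshold $M$ satisfying simultaneously: \textup{(i)}~$F_{\rm in}+F\le M$; \textup{(ii)}~$F_{\rm out}\ge F+nC_1 h/d^2$; \textup{(iii)}~the matching inequality \eqref{eq:cond-loc-subsol} together with the containment $F_{\rm in}r_{\rm in}\le(m+n)(m+2)$ and $r_{\rm in}\le r_0$; \textup{(iv)}~$r_{\rm out}\ge\kappa_n(l+d)$ for the dimensional constant $\kappa_n$ which guarantees that the balls $B_{r_{\rm out}}$ around the array cover $\R^n$; and \textup{(v)}~the openness condition \eqref{eq:cond-lMh}. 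I would set $r_{\rm in}:=r_0$, $F_{\rm in}:=M-F$, $m:=\lceil\sqrt{Mr_0}\,\rceil$ (so the containment in (iii) holds, with $(m+n)(m+2)\asymp M$), $F_{\rm out}:=2F$, and then let $r_{\rm out}$ be the largest value permitted by \eqref{eq:cond-loc-subsol}, which yields $r_{\rm out}\asymp M^{1/(2n)}$. Next take $h:=\theta M^{1/n}/g$, where $g:=M^{1/2+1/n}\,\P(f_1\ge M)$ and $\theta=\theta(n,r_0,r_1,\lambda,C_1,F)$ is a sufficiently large constant, then $d:=\sqrt{nC_1 h/F}$ (so (ii) holds with equality) and $l:=r_{\rm out}/\kappa_n-d$. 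The hypothesis furnishes a sequence $M_k\to\infty$ with $g_k:=M_k^{1/2+1/n}\P(f_1\ge M_k)\to\infty$; along it $d_k=o(r_{\rm out,k})$, hence $l_k>2r_1$ and $l_k\asymp r_{\rm out,k}\asymp M_k^{1/(2n)}$, and $l_k^{\,n}h_k\asymp M_k^{1/2+1/n}/g_k\asymp \P(f_1\ge M_k)^{-1}$, so for $\theta$ large enough (v) holds for all large $k$. Fixing such a $k$, Theorem~\ref{theo:percolation} and Proposition~\ref{prop:lifting-function} deliver the array and $v_{\rm lift}$, and hence $v$; the requirement $|y(a)-y(b)|<2h$ rather than $\le 2h$ holds a.s., failing only when two selected obstacles lie exactly on cuboid faces, a null event.

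The real content is the bookkeeping of the previous paragraph and, in particular, seeing why the critical exponent is exactly $\tfrac12+\tfrac1n$. The $\tfrac12$ comes from $r_{\rm out}$: the containment $F_{\rm in}r_{\rm in}\le(m+n)(m+2)$ forces $m\gtrsim\sqrt{F_{\rm in}r_{\rm in}}\asymp\sqrt M$, and \eqref{eq:cond-loc-subsol} then only allows $r_{\rm out}^{\,n}\lesssim r_{\rm in}^{\,n}F_{\rm in}/(mF_{\rm out})\asymp M/\sqrt M=\sqrt M$; condition (iv) transfers this to $l^{\,n}\lesssim\sqrt M$. The extra $\tfrac1n$ is the gain from taking the cuboid height $h$ as large as possible: since $F_{\rm out}$ cannot be taken below $F$, (ii) forces $d\gtrsim\sqrt{h/F}$, and (iv) caps $d$, hence $h$, at $\asymp M^{1/n}$; as \eqref{eq:cond-lMh} amounts to $\P(f_1\ge M)\gtrsim(h\,l^{\,n})^{-1}$, feasibility requires $\P(f_1\ge M)\gtrsim M^{-1/2-1/n}$ for arbitrarily large $M$, which is exactly what the $\limsup$ hypothesis provides (with room to spare, absorbing the implied constants, by the unbounded factor $g_k$). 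I expect this balancing of the six inequalities to be the main obstacle; the viscosity-solution verification, although it must be done carefully near the kinks and seams, uses only the Laplacian and the quantity $h/d^2$, which stays bounded, so the unboundedness of $h$, $d$ and $\|\nabla v_{\rm lift}\|_\infty$ is harmless.
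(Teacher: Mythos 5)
Your proof is correct and follows essentially the same strategy as the paper's: use percolation (Theorem~\ref{theo:percolation}) to locate an array of strong obstacles, build the radial local supersolution with Laplacian $F_{\rm in}(|x|/r_{\rm in})^m$, glue translates of it via the lifting function (Proposition~\ref{prop:lifting-function}), and read off the exponent $\tfrac12+\tfrac1n$ from balancing the percolation density against the matching inequality \eqref{eq:cond-loc-subsol}. The only real difference is the bookkeeping: you fix $F_{\rm in}=M-F$, $F_{\rm out}=2F$ in terms of the target force $F$ directly and then solve for $r_{\rm out},h,d,l$, whereas the paper parameterizes by an intermediate $K\le M^{1/2+1/n}\P(f_1\ge M)$, sets $F_{\rm out}=2C_1 h/d^2$, and only relates $K$ back to $F$ at the very end; both close, and yours additionally spells out the non-negativity of $v$ and the passage to a single full-probability event, which the paper leaves implicit.
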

\begin{remark}
In dimension 1, this condition can be fulfilled by distributions with finite expectation, e.g., by a 
Pareto distribution $ P(I)( 1 , \alpha) $ for some $1<\alpha< \frac{3}{2}$.
\end{remark}
%

\begin{proof}
Let $ K > 0 $ be arbitrary. 
According to our assumption, there exists $ M \ge 2 K $ such that 
\[ M^{ \frac{1}{2} + \frac{1}{n} } {\P}( f_{1} \ge M ) \ge K. \]
By choosing
\begin{equation}
l := 2 r_{1} + C_{0} \sqrt[n]{ \frac{ M^{ \frac{1}{2} + \frac{1}{n} } }{ h K } } 
\quad \mbox{with} \quad
C_{0} := \sqrt[n]{ \frac{ 3 \log( 2 n + 2 ) }{ \lambda } }
\end{equation}
(with $h>0$ still arbitrary), the condition~\eqref{eq:cond-lMh} is fulfilled.
Hence, by Theorem~\ref{theo:percolation} there exist an array of percolating open boxes $ ( Q_{ a , j_{a} } )_{ a \in \Z^{n} } $.
Let us denote the centers of corresponding obstacles by $ ( x_{a} , y_{a} ) $. 

Now we choose a local supersolution.
We take $ m \ge \max \{ n , 2 \} $ such that
%
%
\[ \frac{ M r_{0} }{2} \ge (m+n)(m+2) \ge \frac{ M r_{0} }{4} \]
(if necessary we take a larger $M$ at the start so that such $m$ exists),
and set the radii
\begin{equation*} 
r_{ \rm in } 
:= r_{0}, 
\quad
r_{ \rm out } 
:= \sqrt{n} \left( l + \frac{d}{2} - r_{1} \right) 
= \sqrt{n} \left( C_{0} \sqrt[n]{ \frac{ M^{ \frac{1}{2} + \frac{1}{n} } }{ h K } }
	+ \frac{d}{2} + r_{1} \right)
\end{equation*}
and the forces
\begin{equation}
\label{eq:F-in-out}
F_{ \rm in } := \frac{ (m+n)(m+2) }{ r_{0} },
\quad
F_{ \rm out } := 2 C_{1} \frac{ h }{ d^{2} } 
\end{equation}
with $ C_{1} $ as Proposition~\ref{prop:lifting-function} and $ h,d > 0 $ still free.
We denote the corresponding solution by $ v_{ \rm local } $.
In order to be a local supersolution, it must fulfil inequality \eqref{eq:cond-loc-subsol}, which reads for our choice of $ F_{\rm in} $ and $ F_{\rm out} $
\begin{equation}
\label{eq:ineq}
m + 2 \ge 2 C_{1} \frac{ r_{0} }{n} \frac{ h }{ d^{2} } \left( - 1 + \frac{ r_{ \rm out }^{n} }{ r_{0}^{n} } \right). 
\end{equation}
Since for $ m \ge \max \{ n , 2 \} $ it holds $ M \le \frac{16}{ r_{0} } m^{2} $, and assuming $ d \ge 2 r_{1} $,
we may estimate
\[ 2 C_{1} \frac{ r_{ \rm out }^{n} }{ n r_{0}^{n-1} } 
\le 2 C_{1} \frac{ n^{n/2} 2^{n-1} }{ n r_{0}^{n-1} } 
		\left( C_{0}^{n} \frac{ M^{ \frac{1}{2} + \frac{1}{n} } }{h K}
		+ \left( \frac{d}{2} + r_{1} \right)^{n} \right)
\le C_{2} \left( \frac{ m^{ 1 + \frac{2}{n} } }{h K} + d^{n} \right) \]
with $ C_{2} = C_{2}( n , \lambda , r_{0} ) $.
Inequality~\eqref{eq:ineq} will surely hold if the following is fulfilled:
\[ m 
\ge C_{2} \frac{ h }{ d^{2} } \left( \frac{ m^{ 1 + \frac{2}{n} } }{hK} + d^{n} \right) 
= C_{2} \left( \frac{ m^{ 1 + \frac{2}{n} } }{ d^{2} K } +  h d^{n-2} \right).
\]
Let us simply fulfil this condition by setting both summands at $ \frac{m}{2} $.
Hence, we define 
\begin{equation*}
d := \sqrt{ \frac{ 2 C_{2} }{K} } m^{ \frac{1}{n} }
\end{equation*}
(where, if necessary, we take appropriate larger $M$ and $m$ so that $ d \ge 2 r_{1} $) and
\begin{equation*}
h := K^{ \frac{n}{2} - 1 } ( 2 C_{2} )^{ - \frac{n}{2} } m^{ \frac{2}{n} }.
\end{equation*}
Now all the scales are set, and $ v_{ \rm local } $ is a local (viscosity) supersolution.

We chose $ r_{ \rm out } $ sufficiently large so that the domain of the flat supersolution
\[ v_{ \rm flat }(x) := \min_{ a \in \Z^{n} } v_{ \rm local }( x - x_{a} ) \]
is $ \R^{n} $. 
We take $ v_{ \rm lift } $ as in Proposition~\ref{prop:lifting-function} with $ y(a) := y_{a} $ for each $ a \in \Z^{n} $.

Since $ v_{\rm local} $ suffices \eqref{eq:local-sol} and $ \| \Delta v_{\rm lift} \|_{\i} \le C_{1} \frac{h}{d^{2}} $, 
the function $ v := v_{ \rm flat } + v_{ \rm lift } $ satisfies
\[ 0 \ge \Delta v( x , \omega ) - f( x , v( x , \omega ) , \omega ) + F \]
for any $ 0 < F \le \min \{ F_{ \rm out } - C_{1} \frac{ h }{ d^{2 } } , M - F_{\rm in} \} $.
By \eqref{eq:F-in-out} 
\[ M - F_{\rm in} \ge \frac{M}{2} \ge K 
\quad \mbox{and} \quad
F_{ \rm out } - C_{1} \frac{ h }{ d^{2 } }
= C_{1} \frac{ h }{ d^{2 } } 
= \frac{ C_{1} }{ ( 2 C_{2} )^{ \frac{n}{2} + 1 } } K^{ \frac{n}{2} }. \]

To conclude, for a given $F$ we choose $ K \ge F $ such that also
\[ F \le \frac{ C_{1} }{ ( 2 C_{2} )^{ \frac{n}{2} + 1 } } K^{ \frac{n}{2} } \] 
and make the construction above.
Hence, for any $F$ pinning takes place.
\end{proof}

\begin{remark}
A sufficient condition for Theorem~\ref{theo:main-cont} to hold is that $  \E( f_{1}^b ) = \i  $ for some $ 0 < b < \frac{1}{2} + \frac{1}{n} $.
This is a consequence of the following lemma.
\end{remark}

\begin{lemma}
Let $ a>b>0 $ and suppose $ X \ge 0 $ is a real-valued random variable with $ \E ( X^b )=\infty $. Then $ \limsup_{x \to \infty} \P(X \ge x) x^a =\infty $.
\end{lemma}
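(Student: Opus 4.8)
The plan is to prove the contrapositive in an averaged sense: if $\limsup_{x\to\infty}\P(X\ge x)x^a < \infty$, then $\E(X^b) < \infty$ for every $b < a$. So suppose there is a constant $C$ and a threshold $x_0$ with $\P(X\ge x) \le C x^{-a}$ for all $x\ge x_0$. First I would write $\E(X^b)$ via the layer-cake formula: since $X\ge 0$,
\[
\E(X^b) = \int_0^\infty \P(X^b \ge t)\,dt = \int_0^\infty \P(X \ge t^{1/b})\,dt = b\int_0^\infty s^{b-1}\P(X\ge s)\,ds,
\]
the last equality by the substitution $t = s^b$.

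Next I would split the integral at $x_0$. The piece $b\int_0^{x_0} s^{b-1}\P(X\ge s)\,ds$ is at most $b\int_0^{x_0}s^{b-1}\,ds = x_0^b < \infty$. For the tail piece, I use the hypothesized bound:
\[
b\int_{x_0}^\infty s^{b-1}\P(X\ge s)\,ds \le bC\int_{x_0}^\infty s^{b-1-a}\,ds,
\]
and since $b - 1 - a < -1$ (because $b < a$), this integral converges. Adding the two pieces gives $\E(X^b) < \infty$, contradicting the assumption $\E(X^b) = \infty$. Hence $\limsup_{x\to\infty}\P(X\ge x)x^a = \infty$.

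I do not expect any serious obstacle here; the argument is a routine application of the layer-cake representation together with the integrability of $s^{b-1-a}$ near infinity. The only minor points of care are: justifying the layer-cake formula for the (possibly non-integrable) random variable $X^b$ (valid by Tonelli since the integrand is non-negative, and the conclusion $\E(X^b)=\infty$ is consistent with the formula), and noting that $\limsup_{x\to\infty}\P(X\ge x)x^a < \infty$ indeed supplies a finite constant $C$ and threshold $x_0$ as used. One could alternatively argue directly without the contrapositive by a dyadic decomposition — bounding $\E(X^b)$ from below by $\sum_k 2^{kb}\P(2^k \le X < 2^{k+1})$ and relating this to $\sum_k 2^{kb}\P(X\ge 2^k)$ — but the continuous layer-cake version above is cleaner and avoids bookkeeping with dyadic blocks.
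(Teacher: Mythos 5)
Your proof is correct and takes essentially the same route as the paper: both argue by contrapositive, assume a tail bound $\P(X\ge x)\le Cx^{-a}$, and apply the layer-cake representation $\E(X^b)=\int_0^\infty \P(X\ge x^{1/b})\,dx$ to conclude $\E(X^b)<\infty$. The only cosmetic difference is that you perform the substitution $t=s^b$ and split at a threshold $x_0$, whereas the paper observes directly that a uniform constant $C$ can be taken for all $x\ge 0$ and bounds $\int_1^\infty x^{-a/b}\,dx$.
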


\begin{proof}
We may rewrite the assumption as
\[ \infty=\E( X^b )=\int_0^\infty \P(X^b \ge x) \x = \int_0^\infty \P(X \ge x^{1/b}) \x. \]
If the claim were wrong, there would exist a $C <\infty$ such that $ \P(X \ge x) x^a \le C $ for all $ x \ge 0 $ which would lead to contradiction as
\[ \E( X^b ) \le 1 +  \int_1^\infty \P(X \ge x^{1/b}) \x \le 1+C \int_1^\infty  x^{-a/b}  \x <\infty. \]
\end{proof}

The two conditions are actually very close.

\begin{lemma}
Let $ X \ge 0 $ be a real-valued random variable. If for some $ b > 0 $ it holds that $\limsup_{x \to \infty} x^b\P(X \ge x) =\infty$, then $\E (X^b)=\infty$.
\end{lemma}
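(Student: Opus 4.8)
The plan is to argue by contraposition: assuming $\E(X^b) < \infty$, I will show directly that $x^b \, \P(X \ge x)$ stays bounded as $x \to \infty$, which contradicts the hypothesis.

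The key observation is that, since $t \mapsto t^b$ is increasing on $[0,\infty)$, for every $x > 0$ the event $\{X \ge x\}$ coincides with $\{X^b \ge x^b\}$. Hence Markov's inequality applied to the non-negative random variable $X^b$ yields
\[ x^b \, \P(X \ge x) = x^b \, \P(X^b \ge x^b) \le \E(X^b) \]
for all $x > 0$. Taking the $\limsup$ as $x \to \infty$ gives $\limsup_{x\to\infty} x^b \, \P(X \ge x) \le \E(X^b)$, so the assumption $\limsup_{x\to\infty} x^b \, \P(X \ge x) = \infty$ forces $\E(X^b) = \infty$.

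There is essentially no obstacle here; the only point requiring a line of care is the identification of the two events, which uses strict positivity of $x$ together with monotonicity of the power. If one prefers to avoid Markov's inequality, an alternative is to use the layer-cake formula $\E(X^b) = \int_0^\infty b x^{b-1} \, \P(X \ge x) \, dx$, extract from the $\limsup$ hypothesis a sparse sequence $x_k \to \infty$ with $x_{k+1} > 2 x_k$ and $x_k^b \, \P(X \ge x_k) \to \infty$, and bound $\int_{x_k/2}^{x_k} b x^{b-1} \, \P(X \ge x) \, dx \ge (1 - 2^{-b}) \, x_k^b \, \P(X \ge x_k)$ using monotonicity of $x \mapsto \P(X \ge x)$; summing over $k$ the integral diverges. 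The Markov argument is shorter, so I would present that one.
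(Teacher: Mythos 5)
Your Markov-inequality argument is correct and is essentially the same as the paper's: the paper bounds $\E(X^b) = \int_0^\infty \P(X \ge x^{1/b})\,dx \ge y^b\,\P(X\ge y)$ via the layer-cake formula, which is just a re-derivation of Markov's inequality for $X^b$. You invoke Markov directly and argue by contraposition rather than showing $\E(X^b)\ge M$ for every $M$, but these are cosmetic differences.
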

\begin{proof}
For every $ M>0 $ there exists a $ y>0 $ such that $ \P(X \ge y) \ge M  y^{-b}$. Therefore,
\begin{align*}
\E (X^b) 
& =\int_0^\infty \P(X^b \ge x) \x = \int_0^\infty \P(X \ge x^{1/b}) \x \ge \\
& \ge \int_0^{y^b} \P(X \ge x^{1/b}) \x \ge y^b \P(X \ge y)\ge y^b M y^{-b}=M.
\end{align*}
Since $M$ was arbitrary, the claim follows.
\end{proof}

\section{Conclusion}
We have shown that in our discrete setting in one space dimension, infinite pinning arises if the random, independent obstacles' strengths have infinite second moment. This should be compared to the continuous case, where infinite pinning in one space dimension occurs if for some $p<\frac{3}{2}$, the $p$-th moment of the obstacles' strengths is infinite. This difference mainly seems to arise due to the fact that not the full obstacle strength can be used in continuum models, as otherwise the pinned interface would not wholly lie inside an obstacle. The question of infinite pinning in discrete models in more than one space dimension remains open. The main question is, however, whether the second moment condition is indeed also necessary for infinite pinning, as the absence of infinite pinning has, to this date, only been shown for models with bounded exponential moment \cite{MR2740531,MR2908614,MR3719956,Bodineau:2013ur}.


\bibliographystyle{abbrv}
\bibliography{DJS-infinite_pinning}


\end{document}